\newtheorem{thm}{Theorem}[section]
\newtheorem{cor}[thm]{Corollary}
\newtheorem{lem}[thm]{Lemma}
\newcommand{\F}{\mathcal{F}}
\newcommand{\la}{\left<}
\renewcommand{\phi}{\varphi}
\newcommand{\ra}{\right>}
\title{On 2-switches and isomorphism classes}
\author{Michael D. Barrus\\
School of Mathematics and Social Sciences, Black Hills State University\\
Spearfish, SD 57799, USA.\\
Email: michael.barrus@bhsu.edu}
\begin{document}
\maketitle
\begin{abstract}
A 2-switch is an edge addition/deletion operation that changes adjacencies in the graph while preserving the degree of each vertex. A well known result states that graphs with the same degree sequence may be changed into each other via sequences of 2-switches. We show that if a 2-switch changes the isomorphism class of a graph, then it must take place in one of four configurations. We also present a sufficient condition for a 2-switch to change the isomorphism class of a graph. As consequences, we give a new characterization of matrogenic graphs and determine the largest hereditary graph family whose members are all the unique realizations (up to isomorphism) of their respective degree sequences.\medskip

\noindent\emph{Keywords}: 2-switch, alternating 4-cycle, graph isomorphism, matrogenic graph, unigraph
\end{abstract}

\section{Introduction}\label{sec: intro}
The degree sequence is one of the simplest parameters associated with a graph, and the efficiency of several graph algorithms depends upon this simplicity. However, the utility of a degree sequence in a graph problem is limited by the fact that most degree sequences belong to several pairwise nonisomorphic graphs (the \emph{realizations} of their respective degree sequences). It is desirable, then, to understand the relationships that exist among graphs with the same degree sequence.

A well known result of Fulkerson, Hoffman, and McAndrew~\cite{FulkersonEtAl65} links graphs having the same degree sequence. An \emph{alternating 4-cycle} is an instance of four vertices $a,b,c,d$ in a graph $G$ such that $ab$ and $cd$ are edges of $G$ and $bc$ and $ad$ are not; we denote this alternating 4-cycle by $\la a,b:c,d\ra$. (See Figure~\ref{fig: a4}.) Suppose $G$ has such an alternating 4-cycle, let $H$ be the graph obtained by deleting $ab$ and $cd$ from $G$ and adding edges $bc$ and $ad$. We refer to these operations as a \emph{2-switch (on $\la a,b:c,d\ra$)}. Note that a 2-switch leaves the degree of every vertex unchanged, so $G$ and $H$ have the same degree sequence.
\begin{figure}
\centering
\includegraphics[width=1.4cm]{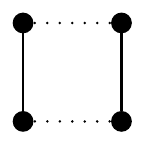}
\caption{An alternating 4-cycle.}
\label{fig: a4}
\end{figure}

\begin{thm}[\cite{FulkersonEtAl65}] \label{thm: FHM}
Two unlabeled graphs $G$ and $H$ have the same degree sequence if and only if there is a sequence of 2-switches that transforms $G$ into $H$.
\end{thm}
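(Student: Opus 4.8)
The plan is to treat the two directions separately. The backward direction is immediate from the observation already recorded before the statement: a single 2-switch preserves every vertex degree, so any graph reachable from $G$ by a sequence of 2-switches has the same degree sequence as $G$. All the work lies in the forward direction, and I would first reduce it to a labeled problem. Since $G$ and $H$ have equal degree sequences, fix a bijection between their vertex sets that matches degrees and use it to relabel $H$; then $G$ and the relabeled copy $H'$ share a vertex set $V$ and satisfy $\deg_G(u)=\deg_{H'}(u)$ for every $u\in V$. Because $H'\cong H$, it suffices to produce a sequence of 2-switches carrying $G$ to $H'$, so from now on the goal is to connect two graphs on a common vertex set with identical degree functions.

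I would prove this labeled statement by induction on $|V|$. Choose a vertex $v$ of maximum degree and aim first to make $v$'s neighborhood agree with its neighborhood in $H'$, using 2-switches. Granting this, let $G^\ast$ be the resulting graph, so $N_{G^\ast}(v)=N_{H'}(v)$; then $G^\ast-v$ and $H'-v$ live on the same vertex set $V\setminus\{v\}$ and still have identical degree functions, so induction supplies a sequence of 2-switches transforming $G^\ast-v$ into $H'-v$. The key point is that each of these 2-switches involves four vertices of $V\setminus\{v\}$ and toggles only edges among them, hence is equally valid in $G^\ast$ and leaves $N(v)$ untouched; performing them in $G^\ast$ therefore yields a graph equal to $H'$ off $v$ and agreeing with $H'$ on $N(v)$, i.e.\ exactly $H'$.

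The heart of the argument is the remaining claim: by 2-switches one can correct a single discrepancy at $v$ without disturbing the already-correct adjacencies at $v$. Concretely, if $x\in N_G(v)\setminus N_{H'}(v)$ and $y\in N_{H'}(v)\setminus N_G(v)$ (these sets have equal size, since the degrees at $v$ agree), I want to delete $vx$ and create $vy$ in the current graph. If some vertex $z\notin\{v,x,y\}$ satisfies $zy\in E(G)$ and $zx\notin E(G)$, then the single 2-switch on $\la v,x:z,y\ra$ accomplishes exactly this, and I would iterate, decreasing the number of discrepancies at $v$ by one each time.

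The main obstacle is the degenerate case in which no such $z$ exists, that is, every neighbor of $y$ is also a neighbor of $x$. Here a single 2-switch cannot simultaneously add $vy$ and remove $vx$, and I expect to need a two-step detour: a preliminary 2-switch among vertices other than $v$ that breaks the near-twin relationship between $x$ and $y$ and thereby creates a usable vertex $z$, followed by the switch above. Verifying that such a preliminary switch always exists---analyzing the tightest configurations, where $x$ and $y$ have essentially identical neighborhoods and $v$ is adjacent to $x$ but not $y$, and exploiting the maximality of $\deg(v)$ together with a degree count---is the delicate part of the proof and the step I would expect to demand the most care.
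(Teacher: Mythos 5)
The first thing to note is that the paper itself does not prove this statement: Theorem~\ref{thm: FHM} is quoted from Fulkerson--Hoffman--McAndrew, so your proposal has to be measured against the classical argument rather than anything in the text. Your skeleton is the standard one (reduce to two graphs on a common vertex set with the same degree function, fix the neighborhood of a maximum-degree vertex $v$ by 2-switches, then recurse on the vertex-deleted graphs, lifting the switches back up), and that part is handled correctly. The genuine gap is that the whole difficulty of the theorem is concentrated in exactly the step you defer: your ``degenerate case'' is not a fringe nuisance to be dispatched by ``a preliminary 2-switch,'' it is the heart of the proof, and what you have written there is a plan, not an argument. The case really occurs, even with $\deg(v)$ maximum. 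Take $V=\{v,x,y,a,b,c\}$, let $G$ have edges $vx,va,vb,xa,xc,ya,bc$ and $H'$ have edges $vy,va,vb,xa,xb,xc,ac$; both have degree function $d(v)=d(x)=d(a)=3$, $d(b)=d(c)=2$, $d(y)=1$, the only discrepancy at $v$ is the pair $(x,y)$, and the sole $G$-neighbor of $y$ is $a$, which is adjacent to $x$ --- so no single 2-switch moves $v$'s edge from $x$ to $y$. Worse, not every preliminary switch helps: the legitimate switch on $\la a,y:c,b\ra$ leaves $y$ with unique neighbor $c$, still adjacent to $x$, so the degeneracy persists, whereas the switch on $\la a,y:b,c\ra$ does break it (then $\la v,x:b,y\ra$ finishes). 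You would therefore need to prove that a degeneracy-breaking preliminary switch always exists and that finitely many suffice, with a termination measure; nothing in the proposal does this, and ``maximality of $\deg(v)$ plus a degree count'' is not obviously enough, since the example above already has $\deg(v)$ maximum.

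The standard repair keeps your skeleton but changes the target of the first phase, and this is the idea your proposal is missing. Do not steer $N(v)$ toward $N_{H'}(v)$; instead steer \emph{both} $G$ and $H'$ toward a canonical target $S$, a fixed set of $d=\deg(v)$ vertices of largest degree in $V\setminus\{v\}$. Then whenever $u\in S\setminus N(v)$ and $z\in N(v)\setminus S$, the choice of $S$ gives $\deg(u)\ge\deg(z)$, and the degenerate case cannot arise: if every $t\notin\{v,u,z\}$ adjacent to $u$ were adjacent to $z$, then $N(u)\setminus\{z\}\subseteq N(z)\setminus\{u,v\}$ (using $v\in N(z)$, $v\notin N(u)$), which forces $\deg(u)\le\deg(z)-1$, a contradiction. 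So a vertex $t$ adjacent to $u$ but not $z$ always exists, the switch on $\la v,z:t,u\ra$ strictly increases $|N(v)\cap S|$, and finitely many switches give graphs $G^*$, $H^*$ with $N_{G^*}(v)=N_{H^*}(v)=S$. Induction applies to $G^*-v$ and $H^*-v$, and since 2-switches are reversible you may concatenate $G\to G^*\to H^*\to H'$. The degree inequality furnished by the canonical choice of $S$ is precisely what eliminates the case your proposal cannot handle.
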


Since a 2-switch changes the adjacencies in a graph, it seems natural that it should often change the isomorphism class of the graph. However, this is not always the case; for example, the unique 2-switch possible on a four-vertex path creates another four-vertex path. In Section 2, we study when a 2-switch changes the isomorphism class of a graph. We present a necessary condition, showing that a 2-switch cannot change the isomorphism class of a graph unless it occurs within one of four vertex-edge configurations (we give a precise definition of a configuration in the following section). We further show that this necessary condition is also sufficient unless the 2-switch happens in one of two more configurations, which are produced by overlaying two of the original four configurations in a ``symbiotic'' way.

We apply these results in Sections 3 and 4 to special classes of graphs. In Section 3 we discuss the matrogenic graphs. We give a new proof that matrogenic graphs are \emph{unigraphs}, that is, unique realizations of their respective degree sequences up to isomorphism. We also provide a new characterization of matrogenic graphs in terms of their alternating 4-cycles and automorphisms.

In Section 4 we study hereditary unigraphs. Though the class of unigraphs is not hereditary (closed under taking induced subgraphs), a number of interesting hereditary graph classes, among them the threshold graphs and matrogenic graphs, contain only unigraphs. Hereditary classes of unigraphs also appear to have an important role in the study of hereditary graph classes having degree sequence characterizations (see \cite{MinTriples,BarrusEtAl08,NonMinimalTriples}). We characterize all hereditary classes of unigraphs, including the maximum such class, in terms of their minimal forbidden induced subgraphs.

\section{2-switches and configurations} \label{sec: 2-switches}
A \emph{configuration} is a triple $(V,E,F)$ such that $V$ and $E$ are the vertex and edge sets, respectively, of a graph $G$, and $F$ is a subset of the edge set of the complement of $G$. We call the elements of $F$ ``non-edges.'' The alternating 4-cycle is an example of a configuration. As in Figure~\ref{fig: a4}, we represent the configuration visually by drawing the graph $G$ with solid edges and joining the vertex pairs in $F$ with dotted segments.

A graph $H$ \emph{contains} the configuration $(V,E,F)$ on vertex set $W$ if there exists a bijection $f:V \to W$ such that vertex pairs in $V$ belonging to $E$ are mapped to adjacent vertices in $H$, and vertex pairs in $V$ belonging to $F$ are mapped to nonadjacent vertices in $H$.

Let $V(G)$ and $E(G)$ denote the vertex set and edge set, respectively, of $G$. A \emph{module} $M$ in $G$ is a subset of $V(G)$ with the property that every vertex in $V(G) \setminus M$ is adjacent to either all or none of the vertices in $M$.

\begin{lem}\label{lem: module implies iso}
Let $\la a,b:c,d\ra$ be an alternating 4-cycle in a graph $G$, and let $H$ be the graph obtained from $G$ by performing the 2-switch on $\la a,b:c,d \ra$. If $\{a,c\}$ is a module in $G-\{b,d\}$, or if $\{b,d\}$ is a module in $G-\{a,c\}$, then $G \cong H$.
\end{lem}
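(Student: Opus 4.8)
The plan is to exhibit an explicit isomorphism $G \to H$, namely a transposition of the two module vertices. By the symmetry between the two hypotheses (exchanging the roles of the edge pair $\{a,c\}$ and the non-edge pair $\{b,d\}$, which amounts to passing to the reverse 2-switch that recovers $G$ from $H$), it suffices to treat the case where $\{a,c\}$ is a module in $G-\{b,d\}$. Let $\sigma$ be the permutation of $V(G)$ that transposes $a$ and $c$ and fixes every other vertex. I claim $\sigma$ is an isomorphism from $G$ to $H$, and I would verify this by checking, for every pair $\{x,y\}$ of distinct vertices, that $\{x,y\}\in E(G)$ if and only if $\{\sigma(x),\sigma(y)\}\in E(H)$, organizing the pairs into three cases.

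The crucial observation is that the four pairs with one endpoint in $\{a,c\}$ and the other in $\{b,d\}$---namely $ab$, $ad$, $cb$, $cd$---are exactly the four pairs whose adjacency status the 2-switch alters, and $\sigma$ permutes these four pairs among themselves (swapping $\{a,b\}\leftrightarrow\{c,b\}$ and $\{a,d\}\leftrightarrow\{c,d\}$). One then checks directly that on these pairs the action of $\sigma$ reproduces the effect of the switch: for instance $ab\in E(G)$ maps to $cb$, which is an edge of $H$ since the switch added $bc$, while the non-edge $ad$ of $G$ maps to $cd$, which the switch made a non-edge of $H$. The remaining two edges of the alternating 4-cycle are handled the same way. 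For a pair $\{a,v\}$ (or $\{c,v\}$) with $v\notin\{a,b,c,d\}$, the image $\{c,v\}$ (resp. $\{a,v\}$) is untouched by the 2-switch, so membership in $E(H)$ reduces to membership in $E(G)$; the required equivalence $av\in E(G)\iff cv\in E(G)$ is then precisely the statement that $v$ is adjacent to both or to neither of $a,c$, which is the module hypothesis. Finally, every pair avoiding both $a$ and $c$---including $\{a,c\}$ itself, $\{b,d\}$, and all pairs among the outside vertices---is fixed by $\sigma$ and also left unchanged by the 2-switch, so the equivalence is immediate.

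I expect the case analysis itself to be entirely routine; the only real content is the bookkeeping needed to confirm that the four cross pairs between $\{a,c\}$ and $\{b,d\}$ coincide with the 2-switch's four modified pairs, and that $\sigma$ exactly reproduces the switch there. The second hypothesis is handled identically by transposing $b$ and $d$ instead of $a$ and $c$, this time invoking the module condition on $\{b,d\}$ to treat the pairs joining $b$ or $d$ to the outside vertices, so the two cases of the lemma follow from a single argument applied symmetrically.
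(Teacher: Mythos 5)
Your proposal is correct and follows essentially the same route as the paper: both exhibit the transposition of $a$ and $c$ (the paper's $\phi_{ac}$, your $\sigma$) as an explicit isomorphism from $G$ to $H$, verify it on the four cross pairs altered by the 2-switch, use the module hypothesis for pairs joining $\{a,c\}$ to outside vertices, and note all other pairs are untouched, with the second hypothesis handled symmetrically via $\phi_{bd}$. The only nitpick is a wording slip in your final case: the pair $\{a,c\}$ does not ``avoid both $a$ and $c$,'' though your conclusion for it is right since it is fixed setwise by $\sigma$ and unchanged by the switch.
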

\begin{proof}
For $u,v \in V(G)$, let $\phi_{uv}$ denote the function on $V(G)$ that maps $u$ and $v$ to each other and fixes every other element of $V(G)$. We claim that if $\{a,c\}$ is a module in $G-\{b,d\}$, then $\phi_{ac}: V(G) \to V(H)$ is an isomorphism. Indeed, both the 2-switch and $\phi_{ac}$ leave unchanged the relationships (adjacency or nonadjacency) between pairs of vertices that contains neither $a$ nor $c$. Let $u$ be any vertex of $G$ not in $\{a,c\}$. By the definition of the 2-switch on $\la a,b:c,d \ra$, and since $\{a,c\}$ is a module in $G-\{b,d\}$, we know that $ua \in E(G)$ if and only if $uc \in E(H)$, if and only if $\phi_{ac}(u)\phi_{ac}(a) \in E(H)$. Similarly, $uc \in E(G)$ if and only if $\phi_{ac}(u)\phi_{ac}(c) \in E(H)$. Finally, the pair $\{a,c\}$ is mapped to itself by $\phi_{ac}$ and unchanged by the 2-switch.

Similarly, if $\{b,d\}$ is a module in $G - \{a,c\}$, then $\phi_{bd}:V(G) \to V(H)$ is an isomorphism.
\end{proof}

\begin{thm}\label{thm: If 2-switch then config}
If a graph $G$ admits a 2-switch that changes the isomorphism class of the graph, then $G$ contains one of the configurations in Figure~\ref{fig: configs}, with vertices $p,q,r,s$ as marked, such that the 2-switch is on $\la p,q:r,s\ra$.
\end{thm}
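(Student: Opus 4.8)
The plan is to prove the contrapositive: I assume $G$ contains \emph{none} of the four configurations in Figure~\ref{fig: configs} with the $2$-switch occurring on $\la p,q:r,s\ra$, and I aim to show that the $2$-switch on any alternating $4$-cycle $\la a,b:c,d\ra$ leaves the isomorphism class unchanged. Given Lemma~\ref{lem: module implies iso}, the natural strategy is to show that the absence of all four configurations forces either $\{a,c\}$ to be a module in $G-\{b,d\}$ or $\{b,d\}$ to be a module in $G-\{a,c\}$; then the lemma delivers $G\cong H$ immediately. So the theorem should reduce to a purely local combinatorial analysis of how an outside vertex $u$ can attach to the four vertices $a,b,c,d$.

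The key step, then, is to enumerate the possible adjacency patterns of an arbitrary vertex $u\in V(G)\setminus\{a,b,c,d\}$ to the set $\{a,b,c,d\}$ and to see which patterns are incompatible with the module conclusion. First I would record the fixed structure on $\{a,b,c,d\}$ itself: $ab,cd\in E(G)$ and $bc,ad\notin E(G)$, while the two remaining pairs $ac$ and $bd$ are unconstrained and split into cases. For $\{a,c\}$ to fail to be a module in $G-\{b,d\}$, there must be a witness vertex $u\notin\{b,d\}$ adjacent to exactly one of $a,c$; symmetrically, for $\{b,d\}$ to fail in $G-\{a,c\}$ there must be a witness adjacent to exactly one of $b,d$. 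The heart of the argument is to show that if both modules fail simultaneously — equivalently, if there is a witness $u$ splitting $\{a,c\}$ and a witness $w$ splitting $\{b,d\}$ (possibly $u=w$, possibly distinct) — then the edges and non-edges among $\{a,b,c,d,u,w\}$ realize one of the four named configurations on the marked vertices $p,q,r,s$. I would organize this by the adjacency type of the witness and check the small finite list of resulting patterns against the four figures.

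I expect the main obstacle to be bookkeeping rather than depth: the case split is driven by (i) whether $ac$ and $bd$ are edges, and (ii) the precise attachment of the splitting witness(es), and one must verify that \emph{every} surviving case embeds one of exactly four configurations, with the $2$-switch correctly aligned to $\la p,q:r,s\ra$. The delicate points will be handling a single witness that splits both pairs at once versus two distinct witnesses, and making sure the labeling $f$ of the embedded configuration sends its marked alternating $4$-cycle onto $\la a,b:c,d\ra$ (or an equivalent relabeling of the same alternating $4$-cycle, using the symmetry $\la a,b:c,d\ra = \la c,d:a,b\ra$). A clean way to control the explosion of cases is to exploit this symmetry of the alternating $4$-cycle together with the symmetry swapping the roles of $\{a,c\}$ and $\{b,d\}$, so that configurations related by these symmetries need only be checked once.

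Conversely, to complete the equivalence in spirit I would note the easy direction is not needed here, since the theorem is stated as a one-way necessary condition; the content is entirely the contrapositive reduction to modules. Thus the proof outline is: assume no configuration appears, pick any alternating $4$-cycle, use the finite witness analysis to conclude one of the two module conditions holds, and invoke Lemma~\ref{lem: module implies iso} to finish.
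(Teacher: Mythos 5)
Your proposal is correct and is essentially the paper's own proof, merely phrased in the contrapositive: the paper argues directly that an isomorphism-class-changing 2-switch on $\la a,b:c,d\ra$ forces both module conditions of Lemma~\ref{lem: module implies iso} to fail, producing witnesses $u,v$ (possibly equal) adjacent to exactly one of $\{a,c\}$ and of $\{b,d\}$ respectively, and then reads off the four configurations from the finitely many attachment patterns, exactly as you outline. The only inessential difference is your extra case split on whether $ac$ and $bd$ are edges, which is unnecessary since the configurations leave those pairs unconstrained.
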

\begin{figure}
\centering
\includegraphics[height=2.2cm]{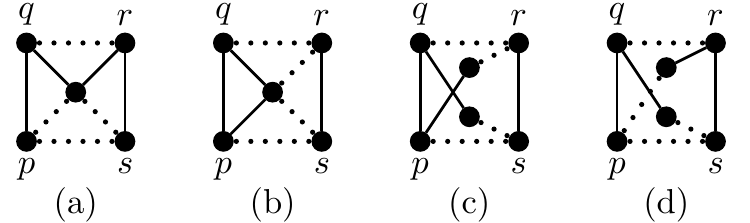}
\caption{Configurations for isomorphism class-changing 2-switches.}
\label{fig: configs}
\end{figure}
\begin{proof}
Suppose that the isomorphism class-changing 2-switch is on $\la a,b:c,d \ra$. By Lemma~\ref{lem: module implies iso}, there must be vertices $u,v$ (with possibly $u=v$) outside of $\{a,b,c,d\}$ such that $u$ is adjacent to exactly one of $a$ and $c$, and $v$ is adjacent to exactly one of $b$ or $d$. Depending on whether $u=v$ and to which of $\{a,c\}$ and $\{b,d\}$, respectively, $u$ and $v$ are adjacent to, $G$ must contain one of the four configurations on the vertex set $\{a,b,c,d,u,v\}$ (with $\{p,q,r,s\}=\{a,b,c,d\}$).
\end{proof}

We observe that in performing the 2-switch on $\la p,q:r,s \ra$, the configuration in (a) yields a configuration identical (save for the vertex labels) to that in (b), and the 2-switch on $\la p,q:r,s \ra$ in the configuration in (b) produces a configuration matching that in (a). The configurations in (c) and (d) also yield unlabeled copies of each other after a 2-switch. Furthermore, replacing edges with non-edges and vice versa in each configuration above, we see that $G$ contains the configuration in (a) or (c) if and only if $\overline{G}$ contains the configuration in (b) or (d), respectively.

An example shows that the converse of Theorem~\ref{thm: If 2-switch then config} is not true. Let $U$ be the graph shown in Figure~\ref{fig: U}; up to isomorphism, this is the unique graph having degree sequence $(4,2,2,2,2,2)$.
\begin{figure}
\centering
\includegraphics[width=1.9cm]{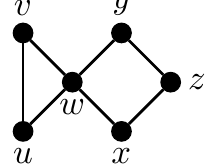}
\caption{The unigraph $U$.}
\label{fig: U}
\end{figure}
The graph contains the configuration in Figure~\ref{fig: configs}(a) on vertex set $\{v,w,x,y,z\}$, but since it is a unigraph, the 2-switch on $\la v,w:z,x \ra$ does not change the isomorphism class. From the paragraph above, we conclude that $U$ must also contain the configuration in Figure~\ref{fig: configs}(b), which it does, on vertex set $\{u,v,w,x,z\}$.

As mentioned above, the 2-switch on $\la p,q,r,s \ra$ changes a copy of the configuration in Figure~\ref{fig: configs}(a) into a copy of the configuration in (b). (Observe that no 5-vertex subgraph can simultaneously contain both configurations.) If the 2-switch does not change the isomorphism class of the graph $G$ containing the configuration, then the number of copies in $G$ of each configuration must remain the same after the 2-switch; hence the 2-switch must also destroy some copy in $G$ of the configuration in (b) and create a copy of the configuration in (a). Similar remarks hold for the configurations in Figure~\ref{fig: configs}(c) and (d).

Intuitively, it seems that the simplest way to preserve the number of copies in $G$ of the configurations in (a) and (b) during a 2-switch is to ``overlay'' the configurations, identifying the copies of $p$, $q$, $r$, and $s$ while keeping the central vertices distinct. In this way, the 2-switch will simultaneously create a copy of one of these configurations on one set of five vertices at the same time it eliminates a copy of the same configuration on another set of five vertices; the two overlaid configurations have a ``symbiotic'' relationship, working together to keep their numbers equal before and after the 2-switch. We show that in fact this phenomenon, or an analogous version involving the configurations in (c) and (d), \emph{must} happen when a 2-switch fails to change an isomorphism class.

\begin{thm} \label{thm: nonchange config}
If a graph $G$ contains one of the configurations shown in Figure~\ref{fig: configs}, and the 2-switch on $\la p,q:r,s \ra$ produces a graph isomorphic to $G$, then the vertices of the configuration lie in one of the configurations in Figure~\ref{fig: configs II}, where the alternating cycle $\la u,v:w,x \ra$ coincides with $\la p,q:r,s \ra$.
\end{thm}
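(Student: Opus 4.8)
The plan is to argue by \emph{conservation of configuration counts}. For any fixed configuration $X$, the number of copies of $X$ contained in a graph is an isomorphism invariant; since the 2-switch produces a graph $H \cong G$, the number of copies of $X$ in $G$ equals the number in $H$ for every $X$. I would first cut down the casework using the symmetries recorded just after Theorem~\ref{thm: If 2-switch then config}. Complementation turns a copy of the configuration in Figure~\ref{fig: configs}(a) (resp.\ (c)) into a copy of the configuration in (b) (resp.\ (d)), sends the 2-switch on $\la p,q:r,s\ra$ to the corresponding 2-switch on the complementary alternating 4-cycle, and preserves the relation $G \cong H$. Hence it suffices to treat the configurations in (a) and (c), the remaining two following by passing to $\overline{G}$.

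So suppose $G$ contains the configuration in Figure~\ref{fig: configs}(a) on $\la p,q:r,s\ra$ together with its central vertex (or vertices). As already observed, the 2-switch on $\la p,q:r,s\ra$ carries this copy to a copy of the configuration in (b) on the same vertices, and no five vertices can host both configurations at once; hence this copy is genuinely destroyed as an (a)-copy. Because the total number of (a)-copies is preserved, the 2-switch must create at least one \emph{new} (a)-copy in $H$, and the whole argument now hinges on locating it.

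The key is locality: the 2-switch alters adjacency only on the four pairs $pq, rs$ (deleted) and $qr, ps$ (added). Any copy of the configuration in (a) present in $H$ but not in $G$ must therefore use at least one of these four pairs, in the edge/non-edge role demanded by (a). I would analyze, case by case according to which changed pair plays which role in (a), how $p,q,r,s$ can embed into a newly created (a)-copy. The aim is to show that the created copy must reuse all four of $p,q,r,s$ as the rim of its alternating 4-cycle, with that cycle coinciding with $\la p,q:r,s\ra$ after the switch, and with a central vertex $t'$ distinct from the original central vertex $t$. Overlaying the original copy (center $t$) and the created copy (center $t'$) on the shared rim $p,q,r,s$ then exhibits the six vertices $\{p,q,r,s,t,t'\}$; computing the forced adjacencies among $t$, $t'$, and the rim — from the data of both copies together with the requirement that the switch trade one for the other — I would verify that they realize precisely the (a)/(b)-overlay configuration of Figure~\ref{fig: configs II}, with $\la u,v:w,x\ra$ coinciding with $\la p,q:r,s\ra$. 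Running the identical argument from the configuration in (c) (whose copies carry two central vertices) produces the (c)/(d)-overlay, completing all cases.

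The main obstacle is exactly this locating step. One must rule out ``spurious'' created copies that use only some of $p,q,r,s$ together with outside vertices, or that embed the changed pairs into (a) in an unintended role; the free (``don't-care'') pairs $pr$ and $qs$ of the alternating 4-cycle admit several sub-cases that must be tracked. I expect the cleanest route is to enumerate the role each changed pair can occupy in the rigid part of configuration (a) and to show that in every admissible assignment the created copy is forced onto the rim $p,q,r,s$ with a new center, so that the overlay — and hence one of the configurations of Figure~\ref{fig: configs II} — is unavoidable.
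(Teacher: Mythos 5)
Your overall framework---isomorphism-invariant counting plus the complementation reduction of (b) and (d) to (a) and (c)---is sound in spirit, and the complementation step matches what the paper does for configuration (d). But the engine of your proof, the ``locating step,'' has a fatal gap: the claim you aim to establish is false. Conservation of the number of (a)-copies only guarantees that \emph{some} new (a)-copy is created \emph{somewhere}; it cannot force that copy onto the rim $\{p,q,r,s\}$. Concretely, take the unigraph $U$ of Figure~\ref{fig: U}, labeled so that its edges are $vw,zx,yw,yz,zv,zu,xu$ (triangle $zxu$ and 4-cycle $zvwy$ sharing $z$). Then $\la v,w:z,x\ra$ lies in a copy of configuration (a) (center $y$), the 2-switch on it returns a graph isomorphic to $U$, and among the newly created (a)-copies is one with alternating 4-cycle $\la u,x:z,y\ra$ and center $v$: its rim meets $\{v,w,z,x\}$ only in $\{x,z\}$. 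So ``spurious'' created copies genuinely occur (here the deleted edge $zx$ serves as a non-edge of the new copy's cycle and the added edge $vx$ serves its center), and no case analysis on the roles of the changed pairs can rule them out; your plan to show that every admissible created copy sits on the rim cannot be completed. The paper escapes this by counting a different invariant, \emph{triangles}: every destroyed triangle must contain the deleted edge $pq$ or $rs$, and every created one must contain the added edge $qr$ or $ps$, so the bookkeeping is exactly local. Writing $A$ (resp.\ $B$) for the set of vertices adjacent to exactly $\{p,q\}$ or exactly $\{r,s\}$ (resp.\ exactly $\{q,r\}$ or exactly $\{p,s\}$), the destroyed and created counts are $|A|+|C|+2|D|$ and $|B|+|C|+2|D|$, forcing $|A|=|B|$; since the center of the given copy lies in one of $A,B$, both are nonempty and the overlay of Figure~\ref{fig: configs II}(a) appears immediately.

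Your treatment of configuration (c) is a second, independent gap. The argument there is not ``identical,'' and the conclusion you announce for it is wrong: when $G$ contains configuration (c) and the 2-switch preserves the isomorphism class, the six vertices may lie in the configuration of Figure~\ref{fig: configs II}(a) rather than in the (c)/(d)-overlay of Figure~\ref{fig: configs II}(b). The same graph $U$ witnesses this: it contains configuration (c) on all six of its vertices, yet it cannot contain the configuration of Figure~\ref{fig: configs II}(b) at all, since that configuration needs seven vertices. Accordingly, the paper's case (c) is the bulk of its proof and uses different tools: either a seventh vertex adjacent to $r$ but not $p$ (or to $s$ but not $q$) exists, giving Figure~\ref{fig: configs II}(b) directly; or one shows $d_G(p)\ge d_G(r)$ and $d_G(q)\ge d_G(s)$, where equality in either comparison reduces to the (a)-argument and yields Figure~\ref{fig: configs II}(a); and the remaining case, $d_G(p)>d_G(r)$ and $d_G(q)>d_G(s)$, is eliminated by yet another invariant---the multiset of neighbors' degrees (``neighborhood lists'')---via a short exchange argument ending in contradiction. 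None of this is recoverable from configuration-counting alone, so the proposal as it stands does not prove the theorem.
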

\begin{figure}
\centering
\includegraphics[height=2.8cm]{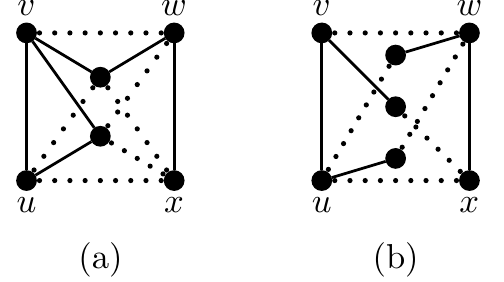}
\caption{The configurations from Theorem~\ref{thm: nonchange config}.}
\label{fig: configs II}
\end{figure}
\begin{proof}
Suppose first that $G$ contains a configuration from Figure~\ref{fig: configs}(a) or (b), with vertices $p,q,r,s$ as labeled there, and the 2-switch on $\la p,q:r,s \ra$ produces a graph isomorphic to $G$. Let $A$ denote the set of all vertices in $G$ whose neighborhoods have intersection with $\{p,q,r,s\}$ equal to $\{p,q\}$ or $\{r,s\}$; let $B$ denote the set of all vertices whose neighborhoods have intersection either $\{q,r\}$ or $\{p,s\}$. Let $C$ and $D$ denote the sets of all vertices whose neighborhoods intersect $\{p,q,r,s\}$ in exactly three and four vertices, respectively. Since the graph resulting from the 2-switch is isomorphic to $G$, the total number of triangles destroyed by the edge deletions in the 2-switch must equal the number of triangles created by the edge additions in the 2-switch. The former quantity equals $|A|+|C|+2|D|$, while the latter equals $|B|+|C|+2|D|$, so we conclude that $|A|=|B|$. It is clear from the configuration that $A$ or $B$ is nonempty, so in fact both are nonempty, and the five vertices of the configuration belong to the configuration shown in Figure~\ref{fig: configs II}(a), with $\la p,q:r,s \ra$ equaling $\la u,v:w,x \ra$ (we avoid equating vertices between the two alternating 4-cycles to allow for symmetries).

Suppose instead that $G$ contains the configuration from Figure~\ref{fig: configs}(c), with the vertices as labeled in the figure, and vertices $y$ and $z$ the vertices shown to be adjacent to $q$ and $p$, respectively. If $G$ contains a seventh vertex $t$ that is adjacent to $r$ but not to $p$, or to $s$ but not to $q$, then these vertices lie in the configuration shown in Figure~\ref{fig: configs II}(b) with $\la p,q:r,s \ra$ equaling $\la u,v:w,x \ra$, so let us suppose henceforth that any vertex outside $\{p,q,r,s,y,z\}$ adjacent to $r$ (respectively, $s$) must be adjacent to $p$ (to $q$).

If $r$ and $p$ have the same degree in $G$, then $y$ is adjacent to $r$ but not $p$, and any vertex outside $\{p,q,r,s,y,z\}$ adjacent to $p$ is also adjacent to $r$. The induced subgraph with vertex set $\{p,q,r,s,y\}$ contains the configuration from Figure~\ref{fig: configs}(a), so by the argument above, $G$ contains the configuration in Figure~\ref{fig: configs II}(a) with vertex set $\{p,q,r,s,y,t\}$ for some $t$. Note that $t$ must be adjacent to exactly one of $r$ and $p$; we conclude that $t=z$. Hence if the degrees of $p$ and $r$ are equal, then the theorem holds. A similar argument with $q,s,z$ replacing $p,r,s$ shows that the theorem also holds if the degrees of $q$ and $s$ in $G$ are equal.

Suppose instead that $d_G(p)>d_G(r)$ and $d_G(q)>d_G(s)$. We define the \emph{neighborhood list} of a vertex $a$ in $G$ to be the multiset $\{d_G(b): b\text{ is adjacent to }a\}$ for $a \in V(G)$, and we denote this by $L_G(a)$. The \emph{neighborhood list of $G$} will be the multiset of neighborhood lists of all its vertices. It is clear that the neighborhood list is a graph invariant. Furthermore, the only vertices whose neighborhood lists can possibly change during a 2-switch are the four vertices of the alternating 4-cycle involved. Now let $G'$ denote the graph obtained from $G$ via the 2-switch on $\la p,q:r,s \ra$. Since $G'$ is isomorphic to $G$, the two graphs have the same neighborhood list, and since $L_{G}(a) = L_{G'}(a)$ for all $a \in V(G)\setminus \{p,q,r,s\}$, we have $\{L_G(p), L_G(q), L_G(r),L_G(s)\} = \{L_{G'}(p), L_{G'}(q), L_{G'}(r), L_{G'}(s)\}$. The multisets $L_G(p)$ and $L_{G'}(p)$ have the same size (the degree of $p$) and differ in exactly one element; in $L_{G'}(p)$ the term $d_G(q)$ in $L_G(p)$ is replaced by $d_G(s)$, which is smaller. Similar arguments show that neighborhood lists of $q$, $r$, and $s$ are also changed by the 2-switch. Now let $a$ be a vertex in $\{p,q,r,s\}$ having a largest neighborhood list in $G$; we have $a \in \{p,q\}$. Since $a$ is also a largest neighborhood list in $G'$, we conclude that $d_G(p)=d_G(q)$. Comparing lengths of neighborhood lists, we are forced to conclude that $L_{G'}(p) = L_G(q)$ and $L_{G'}(q) = L_G(p)$. As noted above, $L_{G'}(p)$ can be obtained from $L_{G}(p)$, and hence from $L_{G'}(q)$, by replacing the term $d_G(q)$ by the smaller number $d_G(s)$. However, a similar argument shows that $L_{G'}(q)$ can be obtained by replacing a term of $L_{G'}(p)$ by a smaller number; together these statements form a contradiction. Thus the theorem holds if $G$ contains the configuration from Figure~\ref{fig: configs}(c).

Finally, if $G$ contains the configuration from Figure~\ref{fig: configs}(d), with the vertices as labeled in the figure, then the complement $\overline{G}$ contains the configuration in Figure~\ref{fig: configs}(c), albeit with the vertex labels in different places. If $G'$ denotes the graph resulting from the 2-switch on $\la p,q:r,s\ra$ in $G$, then $\overline{G'}$ is precisely the graph obtained from $\overline{G}$ via the 2-switch on $\la q,r:s,p\ra$. By the arguments above, the six vertices of the configuration lie, in $\overline{G}$, in one of the configurations shown in Figure~\ref{fig: configs II}, with $\la q,r:s,p \ra$ corresponding to $\la u,v:w,x \ra$. The complement of this latter configuration yields the same configuration with the same vertex set in $G$, with the alternating 4-cycle $\la u,v:w,x\ra$ in $G'$ corresponding to an alternating 4-cycle on the same vertices in $G$. Thus, the theorem holds in this last case as well.
\end{proof}

\begin{cor}
The converse of Theorem~\ref{thm: If 2-switch then config} is true for graphs that contain neither of the configurations shown in Figure~\ref{fig: configs II}.
\end{cor}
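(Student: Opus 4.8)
The plan is to obtain this corollary as an immediate consequence of Theorem~\ref{thm: nonchange config}, essentially by reading that theorem in contrapositive form. Let $G$ be a graph that contains neither of the two configurations shown in Figure~\ref{fig: configs II}. To verify the converse of Theorem~\ref{thm: If 2-switch then config} for $G$, I would assume that $G$ contains one of the four configurations in Figure~\ref{fig: configs}, with vertices $p,q,r,s$ as marked there, and then show that the 2-switch on $\la p,q:r,s\ra$ necessarily changes the isomorphism class of $G$.

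The argument I have in mind is a short proof by contradiction. Suppose, to the contrary, that the 2-switch on $\la p,q:r,s\ra$ produces a graph isomorphic to $G$. Because $G$ already contains one of the configurations of Figure~\ref{fig: configs}, the hypotheses of Theorem~\ref{thm: nonchange config} are exactly satisfied, so that theorem forces the vertices of the configuration to lie within one of the two configurations of Figure~\ref{fig: configs II}, with the alternating 4-cycle $\la u,v:w,x\ra$ coinciding with $\la p,q:r,s\ra$. In particular, $G$ would then contain one of those two configurations, directly contradicting the standing hypothesis that $G$ contains neither. Having reached a contradiction, I conclude that the 2-switch must in fact change the isomorphism class of $G$, which is precisely the assertion of the converse for graphs of this restricted type.

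I do not expect any genuine obstacle, since the corollary is little more than a restatement of Theorem~\ref{thm: nonchange config}. The only point deserving a moment's care is the quantifier and definition bookkeeping: the converse of Theorem~\ref{thm: If 2-switch then config} concerns a \emph{specific} 2-switch on the marked vertices of a contained configuration, so I should confirm that the configuration produced by Theorem~\ref{thm: nonchange config} is one that $G$ truly \emph{contains} in the sense of the definition given in Section~\ref{sec: 2-switches}. Once that matching is made explicit, excluding the two configurations of Figure~\ref{fig: configs II} genuinely eliminates the only way a class-preserving 2-switch could occur, and the corollary follows.
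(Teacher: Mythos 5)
Your proof is correct and matches the paper's intent exactly: the paper states this corollary without proof as an immediate consequence of Theorem~\ref{thm: nonchange config}, and your contrapositive argument is precisely that immediate consequence. Nothing is missing.
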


\section{Matrogenic graphs}
Matrogenic graphs are those graphs $G$ where the vertex sets of the alternating 4-cycles of $G$ form the circuits of a matroid on $V(G)$. These graphs were introduced in \cite{FoldesHammer78} and shown to be the graphs that forbid the configuration in Figure~\ref{fig: matr forb config}. %
\begin{figure}
\centering
\includegraphics[width=1.9cm]{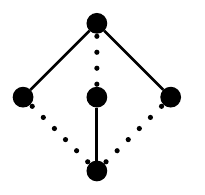}
\caption{The forbidden configuration for matrogenic graphs.}
\label{fig: matr forb config}
\end{figure}
Since each of the configurations in Figure~\ref{fig: configs} contains this configuration (on the four outer vertices and one interior vertex), Theorem 2.2 yields another proof of the following:

\begin{cor}[\cite{MarchioroEtAl84,Tyshkevich84}]
Matrogenic graphs are unigraphs.
\end{cor}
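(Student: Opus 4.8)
The plan is to combine Theorem~\ref{thm: If 2-switch then config} with the $2$-switch connectivity result of Theorem~\ref{thm: FHM}, exploiting the forbidden-configuration description of matrogenic graphs. Let $G$ be a matrogenic graph, and let $H$ be any realization of the degree sequence of $G$. By Theorem~\ref{thm: FHM}, there is a sequence of graphs $G = G_0, G_1, \ldots, G_k = H$ in which each $G_{i+1}$ is obtained from $G_i$ by a single $2$-switch. I want to conclude that $H \cong G$, which would establish that $G$ is a unigraph.

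The first step is to record that each of the four configurations in Figure~\ref{fig: configs} contains the matrogenic forbidden configuration of Figure~\ref{fig: matr forb config} on its four outer vertices together with one interior vertex, as observed above. Since containing a configuration is an isomorphism-invariant property, and since $G$ is matrogenic and hence forbids the configuration of Figure~\ref{fig: matr forb config}, the graph $G$ also forbids all four configurations of Figure~\ref{fig: configs}. The contrapositive of Theorem~\ref{thm: If 2-switch then config} then shows that no $2$-switch applied to $G$ can change its isomorphism class.

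The main point, and the step requiring care, is that Theorem~\ref{thm: If 2-switch then config} controls only a single $2$-switch performed on $G$ itself, whereas the sequence furnished by Theorem~\ref{thm: FHM} passes through intermediate graphs $G_1, \ldots, G_{k-1}$ that are not a priori known to be matrogenic. I would resolve this by induction along the sequence. The base case is $G_0 = G$. For the inductive step, suppose $G_i \cong G$; then $G_i$ is again matrogenic (matrogenicity being preserved under isomorphism), so by the previous paragraph the $2$-switch producing $G_{i+1}$ does not change the isomorphism class, giving $G_{i+1} \cong G_i \cong G$. Thus every graph in the sequence, and in particular $H = G_k$, is isomorphic to $G$.

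Since $H$ was an arbitrary realization of the degree sequence of $G$, every such realization is isomorphic to $G$, so $G$ is a unigraph, completing the argument. I expect the only genuine obstacle to be the propagation of the matrogenic (equivalently, configuration-forbidding) property along the whole $2$-switch sequence; once that is handled by the induction above, the remaining steps follow immediately from the two cited theorems and the containment of Figure~\ref{fig: matr forb config} within each configuration of Figure~\ref{fig: configs}.
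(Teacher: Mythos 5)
Your proof is correct and takes essentially the same route as the paper: since each configuration in Figure~\ref{fig: configs} contains the matrogenic forbidden configuration of Figure~\ref{fig: matr forb config}, Theorem~\ref{thm: If 2-switch then config} shows that no 2-switch on a matrogenic graph changes its isomorphism class, and Theorem~\ref{thm: FHM} then yields uniqueness of the realization. The induction along the 2-switch sequence that you spell out (to handle the intermediate graphs) is precisely the detail the paper leaves implicit, and you handle it correctly.
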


Furthermore, the results in Section 2 allow us to give new characterizations of matrogenic graphs. As in the proof of Lemma~\ref{lem: module implies iso}, given a set $T$ and elements $u$ and $v$ of the set, let $\phi_{uv}$ denote the bijection from $T$ to itself that maps $u$ and $v$ to each other and fixes all other elements of $T$.
\begin{lem} \label{lem: matr char}
A graph $G$ is matrogenic if and only if the following holds: If $\la a,b:c,d \ra$ is any alternating 4-cycle in $G$, and $H$ is the graph obtained from $G$ by performing the 2-switch on $\la a,b:c,d\ra$, then $\phi_{ac}$ and $\phi_{bd}$ are both isomorphisms from $V(G)$ to $V(H)$.
\end{lem}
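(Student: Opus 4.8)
The plan is to reduce the lemma to a purely local statement about modules, using two facts already in hand: Lemma~\ref{lem: module implies iso}, which says that a diagonal of an alternating 4-cycle being a module forces the corresponding transposition to be an isomorphism, and the characterization from \cite{FoldesHammer78} that matrogenic graphs are exactly those forbidding the configuration in Figure~\ref{fig: matr forb config}. The key observation is that this forbidden configuration is precisely the local obstruction to a diagonal being a module: it consists of an alternating 4-cycle $\la a,b:c,d\ra$ together with a fifth vertex adjacent to exactly one endpoint of a diagonal. Since the alternating 4-cycle admits a symmetry interchanging its two diagonals $\{a,c\}$ and $\{b,d\}$, forbidding this single configuration for every alternating 4-cycle simultaneously rules out asymmetric attachments to either diagonal.

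First I would record the converse of Lemma~\ref{lem: module implies iso}, which the lemma as stated does not provide. Specifically, I claim that $\phi_{ac}$ is an isomorphism from $G$ to $H$ if and only if $\{a,c\}$ is a module in $G-\{b,d\}$ (and symmetrically for $\phi_{bd}$ and $\{b,d\}$). The forward implication is Lemma~\ref{lem: module implies iso}. For the reverse, suppose $\phi_{ac}$ is an isomorphism and let $e$ be any vertex outside $\{a,b,c,d\}$. The 2-switch does not alter adjacencies incident to $e$, so $E(G)$ and $E(H)$ agree on such pairs; applying the isomorphism condition to $\{e,a\}$ gives $ea \in E(G)$ iff $ec \in E(H)$ iff $ec \in E(G)$. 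Hence every such $e$ is adjacent to both or neither of $a,c$, which is exactly the assertion that $\{a,c\}$ is a module in $G-\{b,d\}$.

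With this biconditional in hand the forward direction is immediate. Assuming $G$ is matrogenic, let $\la a,b:c,d\ra$ be any alternating 4-cycle. If $\{a,c\}$ were not a module in $G-\{b,d\}$, there would be a vertex $e \notin \{a,b,c,d\}$ adjacent to exactly one of $a,c$; then $\{a,b,c,d,e\}$ would host the configuration of Figure~\ref{fig: matr forb config}, contradicting matrogenicity. Thus $\{a,c\}$ is a module and $\phi_{ac}$ is an isomorphism, and the identical argument applied to the diagonal $\{b,d\}$ -- which by the diagonal-interchanging symmetry of the 4-cycle yields the same forbidden configuration -- shows $\phi_{bd}$ is an isomorphism as well.

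For the reverse direction I would argue by contrapositive. If $G$ is not matrogenic, it contains the configuration of Figure~\ref{fig: matr forb config}: an alternating 4-cycle $\la a,b:c,d\ra$ and a fifth vertex adjacent to exactly one endpoint of some diagonal, say $\{a,c\}$. That witness shows $\{a,c\}$ is not a module in $G-\{b,d\}$, so by the biconditional above $\phi_{ac}$ fails to be an isomorphism from $G$ to the graph obtained by the 2-switch on $\la a,b:c,d\ra$, and the stated condition fails for this alternating 4-cycle. The step requiring the most care is pinning down the correspondence between the forbidden configuration and the module obstruction, together with the symmetry argument that lets a single figure account for both diagonals $\{a,c\}$ and $\{b,d\}$; once that is established, both directions collapse onto the module biconditional.
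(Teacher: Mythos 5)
Your proof is correct and takes essentially the same approach as the paper's: both directions hinge on identifying the F\"oldes--Hammer forbidden configuration with the failure of a diagonal $\{a,c\}$ or $\{b,d\}$ to be a module, using Lemma~\ref{lem: module implies iso} for the forward direction and the witness vertex for the reverse. Your explicit biconditional (isomorphism iff module) is just a repackaging of the paper's direct observation that the witness $u$ gives $ua \in E(G)$ while $\phi_{ac}(u)\phi_{ac}(a) = uc \notin E(H)$, so the underlying argument is the same.
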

\begin{proof}
Suppose $G$ is matrogenic. Since $G$ forbids the configuration in Figure~\ref{fig: matr forb config}, the sets $\{a,c\}$ and $\{b,d\}$ are modules in $G - \{b,d\}$ and $G - \{a,c\}$, respectively, so as in the proof of Lemma~\ref{lem: module implies iso} both $\phi_{ac}$ and $\phi_{bd}$ are isomorphisms.

Conversely, if $G$ is not matrogenic, then it contains the configuration in Figure~\ref{fig: matr forb config} and hence an alternating 4-cycle $\la a,b:c,d \ra$ and a vertex $u$ other than $b$ that is adjacent to $a$ but not to $c$. Let $H$ be the graph resulting from the 2-switch on $\la a,b:c,d \ra$. The map $\phi_{ac}:V(G) \to V(H)$ is not an isomorphism, since $ua \in E(G)$, but $\phi_{ac}(u)\phi_{ac}(a) = uc \notin E(H)$.
\end{proof}

\begin{thm}
A graph is matrogenic if and only if for each alternating 4-cycle $\la a,b:c,d \ra$ in $G$, the map from $V(G)$ to itself that transposes $a$ and $c$, transposes $b$ and $d$, and fixes every other vertex is a graph automorphism.
\end{thm}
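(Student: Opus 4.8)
The plan is to use Lemma~\ref{lem: matr char} as the bridge, reinterpreting its two isomorphisms as permutations of a single common vertex set. Write $\sigma = \phi_{ac}\phi_{bd}$ for the map in the statement (the simultaneous transposition of $a$ with $c$ and of $b$ with $d$); since $\phi_{ac}$ and $\phi_{bd}$ have disjoint supports they commute, so this product is exactly the stated map, and it is an involution fixing every vertex outside $\{a,b,c,d\}$. First I would record that for any alternating 4-cycle $\la a,b:c,d\ra$ with 2-switch result $H$, both $\phi_{ac}$ and $\phi_{bd}$ are permutations of the common set $V(G)=V(H)$, to be regarded as candidate isomorphisms from the graph structure $G$ onto the graph structure $H$.

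For the forward direction, suppose $G$ is matrogenic. By Lemma~\ref{lem: matr char}, for each alternating 4-cycle both $\phi_{ac}$ and $\phi_{bd}$ are isomorphisms from $G$ onto $H$. The key point is that these two isomorphisms share the same codomain $H$, so the composite $\phi_{ac}^{-1}\circ\phi_{bd}$ is an isomorphism from $G$ to itself, i.e. an automorphism of $G$. Because $\phi_{ac}$ is its own inverse and commutes with $\phi_{bd}$, this composite is precisely $\sigma$, so $\sigma$ is an automorphism, as desired. (Alternatively one can bypass Lemma~\ref{lem: matr char} and argue directly: forbidding the configuration of Figure~\ref{fig: matr forb config} makes $\{a,c\}$ and $\{b,d\}$ modules in $G-\{b,d\}$ and $G-\{a,c\}$, and a short check over the pairs of vertices—exactly as in the proof of Lemma~\ref{lem: module implies iso}—shows $\sigma$ preserves adjacency.)

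For the converse I would argue by contrapositive. If $G$ is not matrogenic then, exactly as in the proof of Lemma~\ref{lem: matr char}, $G$ contains the configuration of Figure~\ref{fig: matr forb config}, yielding an alternating 4-cycle $\la a,b:c,d\ra$ together with a fifth vertex $u\notin\{a,b,c,d\}$ satisfying $ua\in E(G)$ and $uc\notin E(G)$. Since $u$ lies outside $\{a,b,c,d\}$, the map $\sigma$ fixes $u$, and it sends the edge $ua$ to the pair $\sigma(u)\sigma(a)=uc$, which is a non-edge of $G$. Thus $\sigma$ fails to preserve adjacency for this particular alternating 4-cycle, so the stated condition does not hold, completing the contrapositive.

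The only genuinely delicate point is conceptual rather than computational: in the forward direction one must notice that $\phi_{ac}$ and $\phi_{bd}$ are isomorphisms onto the \emph{same} graph $H$, since it is this coincidence of codomains—and not either map individually—that forces their composite back into the automorphism group of $G$. Once that observation is made, both directions reduce to the short verifications above, and no separate case analysis of the configurations in Figure~\ref{fig: configs} or Figure~\ref{fig: configs II} is required.
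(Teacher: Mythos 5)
Your proof is correct and matches the paper's own argument essentially step for step: the forward direction composes the two isomorphisms $\phi_{ac}$ and $\phi_{bd}$ from Lemma~\ref{lem: matr char} (the paper writes the same map as $\rho = \phi_{bd}^{-1}\phi_{ac}$) to get an automorphism, and the converse uses the forbidden configuration of Figure~\ref{fig: matr forb config} to exhibit a fixed vertex $u$ whose edge $ua$ is carried to the non-edge $uc$. No substantive differences to report.
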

\begin{proof}
Let $\rho:V(G) \to V(G)$ denote the map described. Note that $\rho = \phi_{bd}^{-1}\phi_{ac}$. If $G$ is matrogenic and $\la a,b:c,d \ra$ and $H$ are as in Lemma~\ref{lem: matr char}, then both $\phi_{ac}$ and $\phi_{bd}$ are isomorphisms, so $\rho$ is an automorphism. If $G$ is not matrogenic, then as in the proof of Lemma~\ref{lem: matr char}, there exists an alternating 4-cycle $\la a,b:c,d\ra$ and a vertex $u$ not in $\{b,d\}$ such that $ua$ is an edge in $G$ and $uc$ is not. Since $\rho(u)\rho(a) = uc$, the map $\rho$ is not an automorphism.
\end{proof}

\section{Hereditary classes of unigraphs} \label{sec: unigraphs}

As mentioned in the introduction, a graph class is hereditary if it is closed under taking induced subgraphs. Naively, one might assume that the class of unigraphs is hereditary, since if some induced subgraph $S$ of a graph $G$ were not a unigraph, then performing a 2-switch on $S$ to change the isomorphism class of that subgraph would seem likely to change the isomorphism class of $G$ as well. However, the graph $U$ in Figure~\ref{fig: U} is an example of a unigraph containing non-unigraphs as induced subgraphs; $U$ induces two nonisomorphic subgraphs having degree sequence $(3,2,2,2,1)$.

Still, there are several familiar hereditary classes that contain only unigraphs. The matrogenic graphs discussed in Section 3 are an example. The class of threshold graphs is another. Though they have several equivalent definitions, threshold graphs were shown in \cite{ChvatalHammer73} to be precisely the class of graphs containing no alternating 4-cycle. Because every 2-switch is performed on an alternating 4-cycle, it follows from Theorem~\ref{thm: FHM} that threshold graphs are unigraphs. (For a survey of results on both threshold and matrogenic graphs, see~\cite{MahadevPeled95}.)

Given a set $\F$ of graphs, we say a graph $G$ is \emph{$\F$-free} if no induced subgraph of $G$ is isomorphic to an element of $\F$. For every hereditary graph class $\mathcal{H}$ there is a set $\F$ of graphs such that $\mathcal{H}$ is precisely the class of $\F$-free graphs. We call elements of $\F$ \emph{forbidden subgraphs} for $\mathcal{H}$. 

Forbidding a configuration such as the one in Figure~\ref{fig: a4} or Figure~\ref{fig: matr forb config} from a graph is equivalent to forbidding a set of induced subgraphs. If a configuration $\mathcal{C}$ has $k$ vertices, let $\mathcal{R}(\mathcal{C})$ denote all $k$-vertex graphs containing the configuration $\mathcal{C}$. We call the elements of $\mathcal{R}(\mathcal{C})$ the \emph{realizations} of $\mathcal{C}$. Clearly a graph is $\mathcal{R}(\mathcal{C})$-free if and only if it does not contain the configuration $\mathcal{C}$. For example, threshold graphs are the $\{2K_2,C_4,P_4\}$-free graphs~\cite{ChvatalHammer73}; these forbidden subgraphs are the realizations of the alternating 4-cycle. The forbidden configuration for matrogenic graphs has ten corresponding forbidden induced subgraphs.

In a recent paper~\cite{BarrusEtAl08}, the author and others defined the set $\F$ of forbidden induced subgraphs to be \emph{degree-sequence-forcing (DSF)} if the membership of a graph $G$ in the class $\mathcal{H}$ can be determined knowing only the degree sequence $G$. Thus $\{2K_2,C_4,P_4\}$ and the set of forbidden subgraphs for the matrogenic graphs are both DSF sets. Among the few sufficient conditions known for a set to be DSF is the following:
\begin{thm}[\cite{BarrusEtAl08}] \label{thm: DSF unigraph lemma}
If the $\F$-free graphs are all unigraphs, then $\F$ is a DSF set.
\end{thm}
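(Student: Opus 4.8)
The plan is to prove the contrapositive by unpacking what it means for $\F$ to fail to be DSF and deriving a non-unigraph in the class. Suppose $\F$ is not DSF. By definition this means there exist two graphs $G$ and $G'$ with the same degree sequence but different membership status in the class $\mathcal{H}$ of $\F$-free graphs: say $G$ is $\F$-free while $G'$ is not. I would take these as the starting data and aim to produce an $\F$-free graph that is not a unigraph, contradicting the hypothesis that all $\F$-free graphs are unigraphs.

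First I would invoke Theorem~\ref{thm: FHM}: since $G$ and $G'$ share a degree sequence, there is a sequence of 2-switches transforming $G$ into $G'$. Starting from $G$ (which lies in $\mathcal{H}$) and ending at $G'$ (which does not), I would walk along this sequence and locate the \emph{first} graph in the chain that leaves the class $\mathcal{H}$. That is, let $G = G_0, G_1, \dots, G_k = G'$ be the intermediate graphs, and let $i$ be the least index with $G_i \notin \mathcal{H}$; then $G_{i-1} \in \mathcal{H}$ is $\F$-free, while $G_i \notin \mathcal{H}$ is not. The single 2-switch taking $G_{i-1}$ to $G_i$ therefore changes membership in $\mathcal{H}$, hence in particular changes the isomorphism class of the graph (an isomorphism-preserving 2-switch could not move a graph out of a hereditary, isomorphism-closed class).

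The key observation is that $G_{i-1}$ and $G_i$ have the same degree sequence but are \emph{not} isomorphic — the 2-switch changed the isomorphism class, as just argued. Thus $G_{i-1}$ is a graph in $\mathcal{H}$, i.e. an $\F$-free graph, that shares its degree sequence with the non-isomorphic graph $G_i$. This means $G_{i-1}$ is not the unique realization of its degree sequence up to isomorphism, so $G_{i-1}$ is not a unigraph. But $G_{i-1}$ is $\F$-free, which contradicts the hypothesis that every $\F$-free graph is a unigraph. This contradiction establishes that $\F$ must be DSF.

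I expect the main subtlety to be the justification that an isomorphism-class-changing 2-switch is exactly what witnesses the failure of the unigraph property. The logical content is light once the right chain of graphs is isolated, so the real work is in phrasing ``the first graph to leave the class'' cleanly and in noting that membership in a hereditary (hence isomorphism-invariant) class cannot change under an isomorphism. One caveat worth checking is the degenerate possibility that the chain length is zero or that $G_{i-1}$ and $G_i$ happen to be isomorphic at the crossing step; but if the 2-switch produced an isomorphic graph it could not change $\mathcal{H}$-membership, so the crossing step genuinely yields non-isomorphic realizations of a common degree sequence, and the argument goes through.
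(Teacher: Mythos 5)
Your proof is logically correct, but note that the paper itself gives no proof of this statement---it is quoted as a known result from \cite{BarrusEtAl08}---so the comparison can only be against the natural direct argument. Against that standard, your proof contains a substantial detour: the appeal to Theorem~\ref{thm: FHM} and the walk along the chain $G_0, G_1, \dots, G_k$ to find the first crossing step is unnecessary. Once you have fixed $G$ ($\F$-free) and $G'$ (not $\F$-free) with the same degree sequence, you can conclude immediately: since $\F$-freeness is an isomorphism-invariant property, $G \not\cong G'$, so $G$ is an $\F$-free graph sharing its degree sequence with a non-isomorphic graph, hence an $\F$-free non-unigraph, contradicting the hypothesis. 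Your crossing-step argument merely re-derives at the pair $(G_{i-1}, G_i)$ exactly what was already available at the pair $(G, G')$; every property you use of the crossing pair (same degree sequence, one in the class and one out, hence non-isomorphic) already holds for the original pair. The chain machinery is the right instinct for the harder results in Section~2 of this paper, where one must localize \emph{where} a 2-switch acts, but here it buys nothing. Still, everything you assert is true and the contradiction is valid, so the proof stands as written---it is simply longer than it needs to be.
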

Such ``unigraph-producing'' sets $\F$ are not the only DSF sets, but empirical evidence suggests~\cite{MinTriples,BarrusEtAl08,NonMinimalTriples} that they account for a large portion of the DSF sets containing small graphs.

We call a graph a \emph{hereditary unigraph} if every one of its induced subgraph is a unigraph. Since the union of a collection of hereditary graph classes is again a hereditary class of graphs, there is a maximum hereditary class of unigraphs, the set of all hereditary unigraphs. Using the results of Section~\ref{sec: 2-switches}, we now characterize this class, which will properly contain the matrogenic graphs and threshold graphs. As a corollary, we derive a characterization of all sets satisfying the hypothesis of Theorem~\ref{thm: DSF unigraph lemma}.

The \emph{house} graph is the complement of $P_5$. The \emph{4-pan} is the graph obtained by attaching a pendant vertex to one vertex of a 4-cycle; the complement of the 4-pan is called the co-4-pan. Let $R$ and $S$ be as shown in Figure~\ref{fig: R and S}. %
\begin{figure}
\centering
\includegraphics[width=10.2cm]{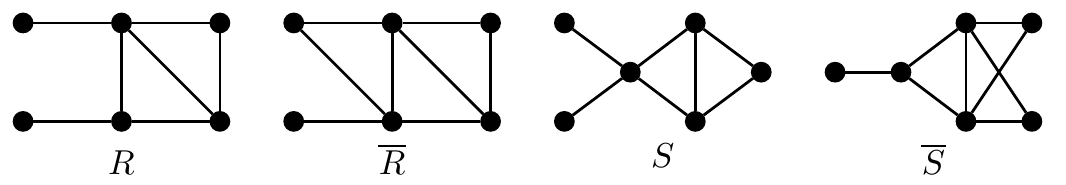}
\caption{The graphs $R$, $\overline{R}$, $S$, and $\overline{S}$.}
\label{fig: R and S}
\end{figure}
Henceforth let \begin{multline}\nonumber\F = \{P_5,\text{house},K_2+K_3,K_{2,3},\text{4-pan},\text{co-4-pan},2P_3,(K_2+K_1)\vee(K_2+K_1),\\K_2+P_4,2K_1 \vee P_4,K_2+C_4,2K_1 \vee 2K_2, R, \overline{R}, S, \overline{S}\},\end{multline} where $+$ and $\vee$ indicate disjoint union and join, respectively.

\begin{thm}\label{thm: forb subgr for hered unigraphs}
The following are equivalent for a graph $G$:
\begin{enumerate}
\item[\textup{(i)}] $G$ is a hereditary unigraph;
\item[\textup{(ii)}] $G$ contains none of the configurations in Figure~\ref{fig: configs};
\item[\textup{(iii)}] $G$ is $\F$-free.
\end{enumerate}
\end{thm}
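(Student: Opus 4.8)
The plan is to establish the two equivalences (i)$\Leftrightarrow$(ii) and (ii)$\Leftrightarrow$(iii) separately. The first I would prove purely conceptually, leaning on Theorems~\ref{thm: If 2-switch then config} and~\ref{thm: nonchange config}; the second I would reduce to a finite matching of the forbidden configurations of Figure~\ref{fig: configs} against the explicit list $\F$. The key preliminary observation is that the property of containing none of the configurations in Figure~\ref{fig: configs} is hereditary: any such configuration witnessed in an induced subgraph already appears, on the same vertices, in $G$, since induced subgraphs preserve both adjacency and nonadjacency.

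For (ii)$\Rightarrow$(i), using the hereditariness just noted, it suffices to show that a configuration-free graph $G$ is itself a unigraph. Given any $H$ with the same degree sequence, Theorem~\ref{thm: FHM} supplies a sequence of 2-switches $G=G_0\to G_1\to\cdots\to G_n=H$, and I would argue by induction that every $G_i\cong G$. Indeed, if $G_i\cong G$ then $G_i$ is configuration-free (containment of a configuration is an isomorphism invariant), so by the contrapositive of Theorem~\ref{thm: If 2-switch then config} the next 2-switch cannot change the isomorphism class, giving $G_{i+1}\cong G_i\cong G$. Hence $H\cong G$, so $G$ is a unigraph; applying this to each (configuration-free) induced subgraph yields (i).

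For the contrapositive of (i)$\Rightarrow$(ii), suppose $G$ contains a configuration from Figure~\ref{fig: configs}, let $G'$ be the induced subgraph on its vertices, and consider the 2-switch on $\la p,q:r,s\ra$ in $G'$. If this changes the isomorphism class of $G'$, then $G'$ is a non-unigraph induced subgraph and we are done. Otherwise Theorem~\ref{thm: nonchange config} forces the configuration's vertices to lie inside a configuration of Figure~\ref{fig: configs II}. When the original configuration is (a) or (b) it has only five vertices, whereas those of Figure~\ref{fig: configs II} require at least six, a contradiction ruling out the isomorphism-preserving case. When the original configuration is (c) or (d) it has six vertices, so the containing configuration must be the six-vertex Figure~\ref{fig: configs II}(a); but that configuration is the ``symbiotic'' overlay of (a) and (b) and therefore contains a five-vertex copy of configuration (a), reducing this case to the previous one. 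This completes (i)$\Leftrightarrow$(ii).

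Finally, for (ii)$\Leftrightarrow$(iii) I would translate configurations into induced subgraphs. Writing $\mathcal{R}$ for the union over the four configurations $\mathcal{C}$ of their realization sets $\mathcal{R}(\mathcal{C})$, ``$G$ contains a configuration'' is the same as ``$G$ is not $\mathcal{R}$-free,'' so it remains to show that the $\mathcal{R}$-free graphs and the $\F$-free graphs coincide. This splits into two finite checks: (A) every graph in $\F$ contains one of the configurations of Figure~\ref{fig: configs}, which gives (ii)$\Rightarrow$(iii); and (B) every realization of a configuration contains some member of $\F$ as an induced subgraph, which gives (iii)$\Rightarrow$(ii). Both halves are cut down by the symmetry that $\F$ is closed under complementation and that $G$ contains configuration (a) or (c) exactly when $\overline{G}$ contains (b) or (d). I expect (B) to be the main obstacle: the six-vertex configurations (c) and (d) leave many vertex pairs unspecified, so their realizations are numerous, and after setting aside those already containing a five-vertex member of $\F$ (such as $P_5$, the house, $K_2+K_3$, or $K_{2,3}$), one must verify that each surviving realization induces one of the six-vertex graphs $2P_3,\ (K_2+K_1)\vee(K_2+K_1),\ K_2+P_4,\ 2K_1\vee P_4,\ K_2+C_4,\ 2K_1\vee 2K_2,\ R,\ \overline{R},\ S,\ \overline{S}$. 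Managing this enumeration, rather than any conceptual difficulty, is where the real effort will lie.
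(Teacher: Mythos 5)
The genuine gap is in (iii) $\Rightarrow$ (ii). You correctly reduce it to the finite claim that every realization of configuration (c) of Figure~\ref{fig: configs} (and, by complementation, of (d)) induces a member of $\F$, but you never carry out that verification, and it is not peripheral bookkeeping---it is the entire content of this direction. Configuration (c) leaves seven vertex pairs unspecified (the two diagonals $pr$ and $qs$ of the alternating 4-cycle, the two pairs joining each pendant vertex to the opposite pair of cycle vertices, and the pair of pendant vertices), so there are $2^7$ labeled completions to account for, and the assertion that each completion not already containing a five-vertex member of $\F$ induces one of $R$, $\overline{S}$, $(K_2+K_1)\vee(K_2+K_1)$, $K_2+P_4$, $K_2+C_4$, etc., is precisely the statement to be proved. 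The paper does this work, and does it by deduction inside an $\F$-free graph $G$ rather than by enumeration: having ruled out configurations (a) and (b) (their realization sets are exactly the five-vertex members of $\F$), it supposes $G$ contains configuration (c) on $\{p,q,r,s,t,u\}$, uses $\{K_2+K_3,K_2+P_4,K_2+C_4\}$-freeness to force $r$ or $s$ to have a neighbor in $\{p,q,t,u\}$, forces $\{p,r,u\}$ to be a triangle, then forces the edges $qt$, $qs$, $st$, and $tu$ in turn (induced copies of $R$ or $\overline{S}$ providing the contradictions along the way), landing on an induced $(K_2+K_1)\vee(K_2+K_1)$---a final contradiction; configuration (d) then follows from the self-complementarity of $\F$. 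Your plan would succeed if executed, since the check is finite and the theorem is true, but as written the hardest part of the proof is a promissory note rather than an argument.

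By contrast, your proof of (i) $\Leftrightarrow$ (ii) is complete, correct, and genuinely different from the paper's in the direction (i) $\Rightarrow$ (ii). The paper handles that direction by asserting ``by inspection'' that no realization of any configuration in Figure~\ref{fig: configs} is a unigraph---an inspection that, for configurations (c) and (d), would itself involve the many realizations just discussed. You avoid it entirely via Theorem~\ref{thm: nonchange config}: a five-vertex realization of (a) or (b) cannot contain either configuration of Figure~\ref{fig: configs II} (those need six and seven vertices, respectively), so the 2-switch must change its isomorphism class; a six-vertex realization of (c) or (d) either has its isomorphism class changed by the 2-switch, or else contains Figure~\ref{fig: configs II}(a), and hence a five-vertex copy of configuration (a), whose induced subgraph is a non-unigraph by the first case. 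Either way $G$ acquires a non-unigraph induced subgraph, which is exactly what the contrapositive requires; note this is slightly weaker than the paper's claim (you show every graph containing a configuration has a non-unigraph induced subgraph, not that every realization is itself a non-unigraph), but it is all the theorem needs. This use of Theorem~\ref{thm: nonchange config}, which the paper proves but never exploits in this proof, is an improvement in rigor and economy. Your (ii) $\Rightarrow$ (i) induction along the 2-switch sequence supplied by Theorem~\ref{thm: FHM}, using the contrapositive of Theorem~\ref{thm: If 2-switch then config}, matches the paper's argument, just spelled out more explicitly.
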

\begin{proof}
(i) $\iff$ (ii): By inspection, we see that none of the realizations of any of the configurations in Figure~\ref{fig: configs} is a unigraph; hence hereditary unigraphs cannot contain any of these configurations. If $G$ is not a hereditary unigraph, then for some induced subgraph $J$ of $G$ there is a 2-switch possible that changes the isomorphism class of the subgraph induced on $V(J)$. By Theorem~\ref{thm: If 2-switch then config}, $J$ (and hence $G$) must contain a configuration from Figure~\ref{fig: configs}.

(ii) $\implies$ (iii): The graphs $P_5$, $K_{2,3}$, and the 4-pan are all realizations of the configuration in Figure~\ref{fig: configs}(a), and their complements are all realizations of Figure~\ref{fig: configs}(b). Each of $K_2+P_4$, $K_2+C_4$, $(K_2+K_1)\vee(K_2+K_1)$, $R$, and $\overline{S}$ is a realization of the configuration in Figure~\ref{fig: configs}(c), and their complements are realizations of the configuration in Figure~\ref{fig: configs}(d). Thus if $G$ contains none of the configurations in Figure~\ref{fig: configs}, then $G$ is $\F$-free.

(iii) $\implies$ (ii): Let $G$ be an arbitrary $\F$-free graph. Since $\{P_5,K_{2,3},\text{4-pan}\}$ and $\{\text{house},K_2+K_3,\text{co-4-pan}\}$ are the sets of realizations of the configurations in Figures~\ref{fig: configs}(a) and (b), respectively, $G$ contains neither of these configurations. Suppose that $G$ contains the configuration shown in (c), and let $p,q,r,s$ be vertices of $G$ as labeled in the figure, with $t$ and $u$ denoting the other configuration vertices adjacent to $p$ and to $q$, respectively. Since $G$ is $\{K_2+K_3,K_2+P_4,K_2+C_4\}$-free, either $r$ or $s$ must have a neighbor in $\{p,q,t,u\}$; by symmetry, we assume that $r$ is adjacent to either $p$ or $u$.

If $r$ is adjacent to $u$, then to avoid a copy of $P_5$, the 4-pan, or $K_{2,3}$ with vertex set $\{p,q,r,s,u\}$, we must have $p$ adjacent to $u$. To avoid a copy of the house or co-4-pan with vertex set $\{p,q,r,s,u\}$, we must have $r$ adjacent to $p$ as well. Since $p$ and $u$ have exactly the same neighbors and and nonneighbors among $\{q,s\}$ in the original configuration, the same argument, with $p$ and $u$ exchanging places, shows that if $r$ is adjacent to $p$ then it is adjacent to $u$. We therefore conclude that $\{p,r,u\}$ is a triangle in $G$.

Consider the vertex set $\{p,q,r,s,t\}$. If $qt$ is not an edge, then in order to avoid a copy of the 4-pan or $K_{2,3}$ on this vertex set, neither $st$ nor $qs$ is an edge. However, then the subgraph of $G$ induced on $\{p,q,r,s,t,u\}$ is isomorphic to either $R$ or $\overline{S}$, depending on whether $tu$ is an edge, a contradiction. Thus $qt$ is an edge. To avoid a copy of the co-4-pan or the house graph with vertex set $\{p,q,r,s,t\}$, vertex $s$ must be adjacent to both $q$ and $t$. To avoid a copy of the house graph with vertex set $\{q,r,s,t,u\}$, we must have $t$ adjacent to $u$, but then the subgraph of $G$ induced on $\{p,q,r,s,t,u\}$ is isomorphic to $(K_2+K_1)\vee(K_2+K_1)$, a contradiction.

Thus $G$ cannot contain the configuration in Figure~\ref{fig: configs}(c). Since $\F$ is a self-complementary class of graphs and we observe that any graph is a unigraph if and only if its complement is, $\overline{G}$ cannot contain this configuration, either. Ignoring the vertex labels in Figure~\ref{fig: configs}, we see that if $G$  were to contain the configuration in (d), then $\overline{G}$ would contain the configuration in (c). Thus $G$ contains none of the configurations in Figure~\ref{fig: configs}.
\end{proof}

\begin{cor}
Let $\mathcal{G}$ be a set of graphs. The $\mathcal{G}$-free graphs are all unigraphs if and only if every element of $\F$ induces an element of $\mathcal{G}$.
\end{cor}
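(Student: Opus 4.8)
The plan is to reduce the corollary to the characterization of hereditary unigraphs established in Theorem~\ref{thm: forb subgr for hered unigraphs}, together with an elementary fact about inclusions of forbidden-subgraph classes. The first step I would take is to observe that ``every $\mathcal{G}$-free graph is a unigraph'' is equivalent to the a priori stronger statement ``every $\mathcal{G}$-free graph is a hereditary unigraph.'' The point is that the class of $\mathcal{G}$-free graphs is itself hereditary: if $G$ is $\mathcal{G}$-free then so is each of its induced subgraphs, and hence if all $\mathcal{G}$-free graphs are unigraphs, then every induced subgraph of $G$ is a unigraph, making $G$ a hereditary unigraph. The converse implication is immediate, since hereditary unigraphs are in particular unigraphs.

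Next I would apply Theorem~\ref{thm: forb subgr for hered unigraphs}, which asserts that the hereditary unigraphs are exactly the $\F$-free graphs. Combined with the first step, the hypothesis of the corollary becomes the class inclusion that every $\mathcal{G}$-free graph is $\F$-free; equivalently, taking contrapositives, every graph that contains an induced copy of some member of $\F$ also contains an induced copy of some member of $\mathcal{G}$.

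The final step is to see that this inclusion holds if and only if every element of $\F$ induces an element of $\mathcal{G}$, which follows from transitivity of the induced-subgraph relation. For the ``if'' direction, any graph inducing some $F \in \F$ then also induces, by transitivity, the copy of an element of $\mathcal{G}$ lying inside $F$. For the ``only if'' direction, I would apply the inclusion to each $F \in \F$ separately: since $F$ induces a copy of itself, it is not $\F$-free, hence not $\mathcal{G}$-free, so it induces a member of $\mathcal{G}$. I do not anticipate a genuine obstacle here; the one step carrying the real content is the opening observation that heredity of the $\mathcal{G}$-free class promotes ``unigraph'' to ``hereditary unigraph,'' which is precisely what allows Theorem~\ref{thm: forb subgr for hered unigraphs} to be brought to bear, with the remaining steps being routine manipulations of forbidden-subgraph inclusions.
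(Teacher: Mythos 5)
Your proof is correct, and it differs from the paper's in a small but genuine way. The ``if'' direction is the same in both: if every element of $\F$ induces an element of $\mathcal{G}$, then $\mathcal{G}$-free graphs are $\F$-free, and Theorem~\ref{thm: forb subgr for hered unigraphs} finishes. The difference is in the ``only if'' direction. The paper argues directly: it invokes the fact that \emph{no element of $\F$ is itself a unigraph} (an inspection fact recorded inside the proof of Theorem~\ref{thm: forb subgr for hered unigraphs}, not in its statement), so any $F \in \F$ inducing no element of $\mathcal{G}$ is immediately a $\mathcal{G}$-free non-unigraph. Your argument avoids that inspection fact entirely: you observe that the class of $\mathcal{G}$-free graphs is hereditary, so ``all $\mathcal{G}$-free graphs are unigraphs'' upgrades to ``all $\mathcal{G}$-free graphs are hereditary unigraphs,'' hence to ``all $\mathcal{G}$-free graphs are $\F$-free'' by the theorem, and then applying this to each $F \in \F$ (which is not $\F$-free, containing itself) yields that $F$ induces an element of $\mathcal{G}$. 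Note this is a real distinction: Theorem~\ref{thm: forb subgr for hered unigraphs} alone only implies each $F \in \F$ has \emph{some} induced subgraph that is not a unigraph, not that $F$ itself fails to be one, so your route is the one that makes the corollary a purely formal consequence of the theorem as stated. What the paper's route buys is brevity, given that the inspection fact is already in hand; what yours buys is independence from it.
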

\begin{proof}
Since none of the elements of $\F$ is a unigraph, if some element $F$ of $\F$ induces no element of $\mathcal{G}$, then $F$ is $\mathcal{G}$-free and hence the $\mathcal{G}$-free graphs are not all unigraphs. If every element of $\F$ induces an element of $\mathcal{G}$, then the $\mathcal{G}$-free graphs are all $\F$-free and by Theorem~\ref{thm: forb subgr for hered unigraphs} are unigraphs.
\end{proof}

We have characterized the hereditary unigraphs and the DSF sets forbidden for subclasses of these graphs. In conclusion we mention that Tyshkevich gave a structural characterization of general unigraphs in~\cite{Tyshkevich00}. Though beyond the aims of this paper, using either Tyskevich's results or the forbidden subgraph characterization in Theorem~\ref{thm: forb subgr for hered unigraphs}, it is possible to provide a structural characterization (and from thence a degree sequence characterization) for the hereditary unigraphs.

\end{document}